\newfont{\msbm}{msbm10 scaled\magstephalf}
\newtheorem{theorem}{Theorem}[section]
\newtheorem{fact}[theorem]{Fact}
\def\b1K{\mbox{\boldmath $K$}_{-1}}
\def\bK{\mbox{\boldmath $K$}}
\newcommand{\cM}{\mathcal{M}}
\newcommand{\cY}{\mathcal{Y}}
\def\cA{\mathcal{A}}
\def\Col{{\rm Col}}
\newbox\noforkbox \newdimen\forklinewidth
\noforkbox\hbox{\lower 2pt\box1\lower
2pt\box0\relax}
\def\unionstick{\mathop{\copy\noforkbox}\limits}
\def\nonfork_#1{\unionstick_{\textstyle #1}}
\def\nonfork_#1{\unionstick_{\textstyle #1}}
\newbox\doesforkbox
\doesforkbox\hbox{\lower 2pt\box1 \lower
2pt\box2\lower2pt\box0\relax}
\def\sub'm{\prec_{\bK'}}
\def\grpf #1 #2{{\rm grp}_{#2}(#1)}
\def\spanf #1 #2{{\rm span}_{#2}(#1)}
\def\fldf #1 #2{{\rm fld}_{#2}(#1)}
\def\dclf #1 #2{{\rm dcl}_{#2}(#1)}
\def\rclf #1 #2{{\rm rcl}_{#2}(#1)}
\def\aclf #1 #2{{\rm acl}_{#2}(#1)}
\def\acff #1 #2{{\rm acf}_{#2}(#1)}
\def\strf #1 #2{{\rm str}_{#2}(#1)}
\def\tclf #1 #2{{\rm acf}_{#2}(#1)}
\def\hbar{{\bf h}}
\date{\today}
\newtheorem{Thm}{Theorem}[section]
\newcommand{\cL}{\mathcal{L}}
\newcommand{\SP}{\mathrm{SP}}
\begin{document}

\title{The number of models of a fixed Scott rank, for a counterexample to the analytic Vaught conjecture}


\author{Paul B. Larson\thanks{Supported in part by NSF Grant
  DMS-1201494 and DMS-1764320.}\\
 Miami University\\
Oxford, Ohio USA\\
\and Saharon Shelah\thanks{Research partially support by NSF grant no: DMS 1101597, and
   German-Israeli Foundation for Scientific Research \& Development
   grant no. 963-98.6/2007. }\\
Hebrew University of Jerusalem\\Rutgers University}

\maketitle

\begin{abstract}
We show that if $\gamma \in \omega \cup \{\aleph_{0}\}$ and $\cA$ is a counterexample to the analytic Vaught conjecture having exactly 
$\gamma$ many models of Scott rank $\omega_{1}$, then there exists a club $C \subseteq \omega_{1}$ such that $\cA$ has exactly $\gamma$ many models of Scott rank $\alpha$, for each $\alpha \in C$.
\end{abstract}

Throughout this note $\tau$ represents a countable relational vocabulary. The set of $\tau$-structures with domain $\omega$ is naturally seen as a Polish space $X_{\tau}$, where a basic open set is given by the set of structures in which $R(i_{0},\ldots,i_{n-1})$ holds, for $R$ an $n$-ary relation symbol from $\tau$ and $i_{0},\ldots,i_{n-1} \in \omega$ (see Section 11.3 of \cite{Gao}, for instance). Given a sentence $\phi \in \cL_{\aleph_{1}, \aleph_{0}}(\tau)$, the set of models of $\phi$ with domain $\omega$ is a Borel subset of $X_{\tau}$.
By a theorem of Lopez-Escobar \cite{Lopez-Escobar}, every Borel subset of $X_{\tau}$ which is closed under isomporphism is also the set of models (with domain $\omega$) of some $\cL_{\aleph_{1}, \aleph_{0}}(\tau)$ sentence.

We call the following (false) statement the \emph{analytic Vaught conjecture}: for every countable relational vocabulary $\tau$, every analytic subset of $X_{\tau}$ which is closed under isomorphism and contains uncountably many nonisomorphic structures contains a perfect set of nonisomorphic structures. Steel \cite{Steel} presents two counterexamples to this statement, one due to H. Friedman and the other to K. Kunen.

Given a $\tau$-structure $M$,
we let $\SP_{\alpha}(M)$ denote the Scott process of $M$ of length $\alpha$, as defined in \cite{Larson} (this is essentially the same as the standard definition appearing in \cite{Scott, Marker2002, Marker2016}; we assume some familiarity with \cite{Larson} in the arguments below, but expect that familiarity with the classical Scott analysis will suffice). Scott's Isomorphism Theorem \cite{Scott} (rephrased) says that if $\alpha$ is a (necessarily countable) ordinal and $M$ and $N$ are countable $\tau$-structures of Scott rank at most $\alpha$, then $M$ and $N$ are isomorphic if and only if $\SP_{\alpha+1}(M) = \SP_{\alpha+1}(N)$.

Given a set $\cA \subseteq X_{\tau}$, we let $\cA^{*}$ denote the class of (ground model, but possibly uncountable) $\tau$-structures $M$ which are isomorphic to an element of the reinterpretation of $\cA$ in any (equivalently, every, by $\Sigma^{1}_{1}$-absoluteness) outer model in which $M$ is countable. If $\cA$ is the set of $\tau$-structures on $\omega$ satisfying a sentence $\phi$ of $\cL_{\aleph_{1}, \aleph_{0}}(\tau)$, then $\cA^{*}$ as defined above is simply the class of models of $\phi$.

For an ordinal $\alpha$, we let
$\SP_{\alpha}(\cA)$ denote the set of the Scott processes of length $\alpha$ for structures in $\cA^{*}$.
If $\cA$ is a counterexample to the analytic Vaught conjecture, then $|\SP_{\alpha}(\cA)| \leq |\alpha|$ (for $\alpha < \omega_{1}$ this follows by an induction argument using the Perfect Set Property for analytic sets; considering of a forcing extension via $\Col(\omega, \alpha)$ completes the argument for $\alpha \geq \omega_{1}$).

We also let $\cA_{\alpha}$ denote respectively the class of structures $\cA^{*}$ of Scott rank $\alpha$.
The following well-known fact (slightly restated here) appears as Corollary 10.2 in \cite{Larson}.

\begin{fact}\label{fact1} Suppose that $\mathcal{A}$ is a counterexample to the analytic Vaught conjecture, and let $x \subseteq \omega$ be such that $\mathcal{A}$ is $\Sigma^{1}_{1}$ in $x$. Let $M$ be a member of the reinterpreted version of $\mathcal{A}$ in a forcing extension of $V$, and let $\alpha$ be an ordinal. Then $\SP_{\alpha}(M) \in L[x]$.
\end{fact}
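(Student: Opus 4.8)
The plan is to reduce the statement to the single assertion that the set $\SP_\alpha(\cA)$ belongs to $L[x]$, and then to read off the conclusion from the transitivity of $L[x]$. Once we know $\SP_\alpha(\cA) \in L[x]$, transitivity of the class $L[x]$ gives $q \in L[x]$ for every $q \in \SP_\alpha(\cA)$; so it suffices to check that $\SP_\alpha(M)$ is one of these elements. For the latter I would argue that membership in $\cA$ is $\Sigma^1_1$ in $x$ and that the Scott process of a fixed structure is computed absolutely, so that ``$q$ is the Scott process of length $\alpha$ of some member of $\cA$'' is an absolute condition on $q$; consequently the set $\SP_\alpha(\cA)$, although defined through the ground-model class $\cA^*$, in fact collects the Scott processes of length $\alpha$ of the members of $\cA$ in every forcing extension. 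In particular $\SP_\alpha(M) \in \SP_\alpha(\cA)$, and the problem is reduced to $\SP_\alpha(\cA) \in L[x]$.

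To prove $\SP_\alpha(\cA) \in L[x]$ I would induct on $\alpha$, building the sequence $\langle \SP_\beta(\cA) : \beta \le \alpha \rangle$ inside $L[x]$. The recursion is the usual one for Scott processes: $\SP_{\beta+1}(\cA)$ consists of those one-step extensions $q$ of members of $\SP_\beta(\cA)$ that are \emph{realized}, i.e.\ equal to $\SP_{\beta+1}(N)$ for some $N \in \cA$, with limit stages assembled coherently. The one-step extension relation is arithmetic and hence absolute; the cardinality bound $|\SP_\beta(\cA)| \le |\beta|$ established above guarantees that each stage is a genuine set and that the whole construction stays set-sized, so the recursion can be run in $L[x]$. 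The heart of the matter is that the realizability clause is computed correctly in $L[x]$: ``some $N \in \cA$ has $\SP_{\beta+1}(N) = q$'' is $\Sigma^1_1$ in $x$ (together with $q$ and a code for $\beta+1$), and since $\Sigma^1_1(x)$ statements are absolute between $L[x]$ and $V$ by Mostowski absoluteness, the set computed by the recursion in $L[x]$ should agree with $\SP_{\beta+1}(\cA)$ as computed in $V$.

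The step I expect to be the main obstacle is exactly this realizability clause at stages $\beta \ge \omega_1^{L[x]}$, where $L[x]$ has no real coding the (in $V$ countable) ordinal $\beta$, so Mostowski absoluteness cannot be invoked naively with a real parameter for $\beta$. To get around this I would replace the crude $\Sigma^1_1$ description by the intrinsic, code-free characterization of realizable $\cA$-Scott processes supplied by the machinery of \cite{Larson} (the amalgamation and coherence conditions on the tree of Scott processes), which refers to $\beta$ only as an ordinal and to $\cA$ only through $x$; this is what makes the recursion genuinely internal to $L[x]$. An alternative packaging is to pass first to the extension $V[G][H]$ with $H$ generic for $\Col(\omega,\alpha)$, in which $\alpha$ is countable and $\SP_\alpha(\cA)$ becomes a thin (by the bound) $\Sigma^1_2(x)$ set of reals, and then apply the Mansfield--Solovay theorem to place it inside $L[x]$; but the coding of $\alpha$ reappears there in the verification of thinness, so on either route the decisive point is the absoluteness, in a parameter depending only on $x$ and the ordinal $\beta$, of the property of being realized by a member of $\cA$.
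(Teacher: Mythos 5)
The paper itself does not prove this Fact (it cites Corollary 10.2 of \cite{Larson}); its method is visible, however, in the proof it gives of Theorem \ref{sec10cor2}: one works over a small model containing $x$, uses $\Sigma^{1}_{1}$-correctness to handle levels below that model's $\omega_{1}$, and then, for the $\Col(\omega,\delta)$ extension, argues that a name for a member of $\SP_{\delta}(\cA)$ outside the model would admit perfectly many generic filters giving distinct realizations, contradicting $|\SP_{\delta}(\cA)| \leq |\delta|$. Your proposal is a recognizable relative of this (thinness plus Mansfield--Solovay is the same perfect-set phenomenon in different packaging), and you have correctly isolated where the difficulty lies; but two steps are asserted rather than proved, and they are in fact the same missing step.

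Your opening reduction --- that $\SP_{\alpha}(M) \in \SP_{\alpha}(\cA)$ when $M$ lives only in a forcing extension --- does not follow from absoluteness of the realizability predicate: that predicate is (for countable $\alpha$ with a code) $\Sigma^{1}_{1}$ and hence upward absolute, but the issue is downward, since a new structure in $V[G]$ could a priori have a new Scott process, and ruling this out is essentially the content of the Fact rather than a preliminary. Likewise, in your Mansfield--Solovay packaging the theorem only places the set of realized Scott processes inside $L[x,c]$ for $c$ a real coding $\alpha$, and you flag the descent to $L[x]$ as ``the decisive point'' without carrying it out. Both gaps are closed by the same device, which is exactly the one the paper deploys: force with $\Col(\omega,\alpha)$ over $L[x]$ (or over the relevant countable model), note that $\SP_{\alpha}(\cA)$ as computed in the extension is a countable set definable from $x$ and $\alpha$ alone and independent of the generic, and observe that if some element were not in $L[x]$ then its name would have perfectly many distinct realizations under perfectly many generics; since distinct Scott processes of the same length yield nonisomorphic structures, this produces a perfect set of pairwise nonisomorphic members of $\cA$, contradicting the counterexample hypothesis. (Equivalently: $\SP_{\alpha}(\cA) \subseteq L[x][c_{0}] \cap L[x][c_{1}]$ for mutually generic codes $c_{0}, c_{1}$ of $\alpha$, and mutual genericity pushes the intersection down to $L[x]$.) Your ``intrinsic, code-free'' alternative is under-specified for the same reason: the amalgamation and coherence conditions of \cite{Larson} characterize realizability by \emph{some} countable structure, not realizability by a member of the analytic class $\cA$, and it is precisely the latter that must be computed correctly in $L[x]$ at levels $\beta \geq \omega_{1}^{L[x]}$.
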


The proof of Fact \ref{fact1} given in \cite{Larson} shows the following. Similar arguments appear in Section 1 of \cite{Hjorth96} and Chapter 32 of \cite{Miller}.

\begin{Thm}\label{sec10cor2} Suppose that $\mathcal{A}$ is a counterexample to the analytic Vaught conjecture, and let $x \subseteq \omega$ be such that $\mathcal{A}$ is $\Sigma^{1}_{1}$ in $x$. Let $Y$ be a countable elementary submodel of $H((2^{\aleph_{1}})^{+})$ with $x \in Y$, let $\delta = Y \cap \omega_{1}$ and let $P$ be the transitive collapse of $Y$. Then $\SP_{\delta+1}(\cA) = \SP_{\delta+1}(\cA)^{P}$.
\end{Thm}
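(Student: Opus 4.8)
The plan is to realise both sides as images of a single object under the collapse $\pi\colon Y\to P$, and then to reconcile the one place where the two computations can disagree, namely the top level $\delta$. Write $S=\SP_{\omega_1+1}(\cA)$. Since $S$ is definable in $H((2^{\aleph_1})^+)$ from the parameter $x$, and $x,\omega_1\in Y$, we have $S\in Y$; as $\pi(x)=x$ and $\pi(\omega_1)=\delta$, elementarity gives $\pi(S)=\SP_{\delta+1}(\cA)^{P}$, and $\pi(S)=\pi[S\cap Y]$. Two preliminary facts feed the argument. First, for each $\beta<\delta$ the counting bound yields $|\SP_{\beta+1}(\cA)|\le|\beta+1|=\aleph_0$, while $\SP_{\beta+1}(\cA)\in Y$ (it is definable from $x,\beta\in Y$); a countable set lying in $Y$ is a subset of $Y$, so $\pi$ fixes $\SP_{\beta+1}(\cA)$ pointwise and setwise, giving $\SP_{\beta+1}(\cA)^{P}=\SP_{\beta+1}(\cA)$ for every $\beta<\delta$. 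Second, $\delta=Y\cap\omega_1$ is a limit ordinal, so the $\delta$-th level of any Scott process is computed from the earlier levels by the absolute limit (continuity) operation of the Scott process.

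For the inclusion $\SP_{\delta+1}(\cA)^{P}\subseteq\SP_{\delta+1}(\cA)$, I would take $t\in S\cap Y$, say $t=\SP_{\omega_1+1}(M)$, and compute $\pi(t)$. For $\beta<\delta$ one has $\pi(t)(\beta)=\pi(t(\beta))=t(\beta)$, since $t(\beta)$ is a hereditarily countable member of $Y$; and $\pi(t)(\delta)=\pi(t(\omega_1))$. Here coherence does the work: by elementarity $\pi(t)$ is a genuine Scott process of length $\delta+1$ in $P$, so its $\delta$-th level is the absolute limit operation applied to $\pi(t)\restriction\delta=t\restriction\delta$, and this is the very same operation that produces $t(\delta)$ from $t\restriction\delta$ in $V$ because $\delta$ is a limit. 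Hence $\pi(t)=t\restriction(\delta+1)=\SP_{\delta+1}(M)$, and so $\SP_{\delta+1}(\cA)^{P}=\pi[S\cap Y]\subseteq\SP_{\delta+1}(\cA)$.

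The reverse inclusion is the substantive half. Given $s=\SP_{\delta+1}(M)\in\SP_{\delta+1}(\cA)$, put $t=\SP_{\omega_1+1}(M)\in S$, so $s=t\restriction(\delta+1)$, and set $n_\beta:=s\restriction(\beta+1)$. Each $n_\beta$ lies in $\SP_{\beta+1}(\cA)\subseteq Y$, hence in $Y$. By the coherence remark it suffices to find $t^{*}\in S\cap Y$ with $t^{*}\restriction(\beta+1)=n_\beta$ for all $\beta<\delta$: then $t^{*}\restriction\delta=s\restriction\delta$, so $\pi(t^{*})=s$ and $s\in\pi[S\cap Y]=\SP_{\delta+1}(\cA)^{P}$. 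Thus the theorem reduces to a branch-reflection statement: a realized cofinal branch of the tree of Scott processes whose initial segments all lie in $Y$ is itself realized by a branch lying in $Y$. To prove this I would run a Cantor--Bendixson analysis, inside $Y$, of the space $\mathcal B=\{\,\mathrm{branch}(t):t\in S\,\}$ topologised by the tree nodes (both the tree and $\mathcal B$ belong to $Y$). Assign to each node $n$ the Cantor--Bendixson rank $\rho(n)$ of the realized branches through $n$; then $\rho\in Y$ takes countable-ordinal values, and along our branch the ranks $\rho(n_\beta)$ are nonincreasing. As these values lie in $Y\cap\omega_1=\delta$ and a nonincreasing sequence of ordinals can strictly drop only finitely often, they stabilise at some $\rho^{*}\in Y$ already at a stage $\beta_0<\delta$; stabilisation means some node $n_{\beta_1}$ with $\beta_1<\delta$ isolates our branch within the $\rho^{*}$-th derivative, and since $n_{\beta_1}$ and $\rho^{*}$ lie in $Y$, elementarity produces the desired $t^{*}\in S\cap Y$.

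The hard part, and the only place the hypothesis on $\cA$ is genuinely used, is the input that makes this Cantor--Bendixson analysis terminate, namely that $\mathcal B$ is scattered (equivalently, contains no perfect set of branches). This is precisely the structural content of $\cA$ being a counterexample to the analytic Vaught conjecture: it is what lies behind the bound $|\SP_\alpha(\cA)|\le|\alpha|$ together with the absoluteness argument supporting Fact~\ref{fact1}. Rather than reprove it, I would invoke that argument from \cite{Larson} directly, observing that the proof of Fact~\ref{fact1} already carries out exactly this reflection of realized Scott processes into the collapse $P$; everything else above is bookkeeping about the collapse map and the limit level $\delta$.
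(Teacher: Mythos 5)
Your argument leans, at every critical point, on the claim that for limit $\delta$ ``the $\delta$-th level of any Scott process is computed from the earlier levels by the absolute limit (continuity) operation.'' This is false, and it is precisely the point of difficulty that the theorem has to address. An individual $\delta$-formula at a limit level is indeed a coherent thread through the earlier levels, but the $\delta$-th \emph{level} of $\SP_{\delta+1}(M)$ is the set of threads \emph{realized} in $M$, and this set is not determined by the sequence of earlier levels (if it were, Scott analysis would trivialize at limit stages, and Propositions 5.19 and 7.10 of \cite{Larson}, which concern exactly when a top level at a limit stage amalgamates and is realized, would be unnecessary). This false step carries both of your inclusions: in the forward direction you use it to conclude $\pi(t)(\delta)=t(\delta)$, and in the reverse direction your reduction ``$t^{*}\restriction\delta=s\restriction\delta$, so $\pi(t^{*})=s$'' needs $t^{*}(\delta)=s(\delta)$, which does not follow from agreement below $\delta$. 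Separately, your Cantor--Bendixson argument is only a sketch at the place where the real work happens: the tree has height $\omega_1+1$, so the scatteredness of the branch space, the location of the ranks, and the final step ``elementarity produces the desired $t^{*}\in S\cap Y$'' all need justification, and your closing move of ``invoking the argument behind Fact \ref{fact1} directly'' defers exactly the content the theorem asks you to supply.

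For contrast, the paper's proof isolates the limit level as the genuine obstacle and handles it by forcing. For $\alpha<\delta$ the equality $\SP_{\alpha}(\cA)=\SP_{\alpha}(\cA)^{P}$ follows from countability of $\SP_{\alpha}(\cA)^{P}$ in $P$ together with $\Sigma^{1}_{1}$-correctness of $P$ (this replaces your ``countable sets in $Y$ are subsets of $Y$'' step, which by itself only gives $\SP_{\alpha}(\cA)\cap P$, not equality). For levels $\delta$ and $\delta+1$, one passes to a $P$-generic extension $P[g]$ by $\Col(\omega,\delta)$, where $\delta$ is countable and the same correctness argument applies, and then shows that $P[g]$ adds no new members of $\SP_{\delta}(\cA)$ or $\SP_{\delta+1}(\cA)$: a name for a new member would yield perfectly many distinct realizations via perfectly many mutually generic filters, contradicting $|\SP_{\delta}(\cA)|\leq|\delta|$. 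Some such argument (forcing, or a correct derivative analysis that explicitly confronts the top level) is unavoidable; as written, your proof does not establish the statement at level $\delta$.
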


\begin{proof}
  Since $\cA$ is a counterexample to the analytic Vaught conjecture, $\SP_{\alpha}(\cA)^{P}$ is a countable set in $P$, for each $\alpha < \delta$. It follows that $\SP_{\alpha}(\cA)^{P} = \SP_{\alpha}(\cA)$ for each such $\alpha$, since $P$ is correct about the $\Sigma^{1}_{1}$ statement asserting that some object satisfying the conditions for membership in $\SP_{\alpha}(\cA)$ is unequal to all the members of the countable set $\SP_{\alpha}(\cA)^{P}$. Letting $g$ be $P$-generic for $\Col(\omega, \delta)$ (the partial order of finite partial functions from $\omega$ to $\delta$, ordered by inclusion), the same argument applies to show first that $\SP_{\delta}(\cA) = \SP_{\delta}(\cA)^{P[g]}$ and then that $\SP_{\delta + 1}(\cA) = \SP_{\delta + 1}(\cA)^{P[g]}$. However, each member of $\SP_{\delta}(\cA)$ in $P[g]$ must be in $P$, since otherwise there is a $\Col(\omega, \delta)$-name for an element not in $P$, and one can find perfectly many generic filters for $P$ giving distinct realizations of this name. The same argument again shows that each member of $\SP_{\delta+1}(\cA)$ in $P[g]$ must be in $P$.
\end{proof}

Theorem \ref{sec10thrm1} below can also be proved using material from \cite{Shelah43}.



\begin{Thm}\label{sec10thrm1} Suppose that $\mathcal{A}$ is a counterexample to the analytic Vaught Conjecture and $\gamma \in \omega \cup \{\aleph_{0}\}$ is such that there are up to isomorphism exactly $\gamma$ many elements of $\cA^{*}$ of Scott rank $\omega_{1}$.
Then for club many $\alpha < \omega_{1}$ there are exactly $\gamma$ many models in $\cA$ of Scott rank $\alpha$, up to isomorphism.
\end{Thm}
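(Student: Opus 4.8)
The plan is to realise the club as $C=\{\,\delta<\omega_1 : \delta=Y\cap\omega_1 \text{ for some countable } Y\prec H((2^{\aleph_1})^+) \text{ with } x,\gamma,\cA\in Y\,\}$, which is closed and unbounded, and to show that every $\delta\in C$ works. Fixing such a $\delta$, let $Y$ witness it and let $P$ be the transitive collapse of $Y$, so that $\omega_1^P=\delta$ and, by Theorem~\ref{sec10cor2}, $\SP_{\delta+1}(\cA)=\SP_{\delta+1}(\cA)^P$. The guiding idea is that $P$ sees $\delta$ as its own $\omega_1$, so by elementarity $P$ has exactly $\gamma$ models of Scott rank $\omega_1^P=\delta$; since this count is computed from $\SP_{\delta+1}(\cA)$, which $P$ gets right, $V$ should also see exactly $\gamma$ models of Scott rank $\delta$.

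To make ``computed from $\SP_{\delta+1}(\cA)$'' precise, set $W=\{\,s\in\SP_{\delta+1}(\cA) : s \text{ refines properly at every stage} <\delta\,\}$. The defining condition is absolute and examines only stages $\le\delta$, so by the agreement of Scott processes $W$ is one and the same countable set whether formed in $P$ or in $V$. I will match $W$ with the relevant models on both sides. In $P$: every element of $\SP_{\delta+1}(\cA)^P$ is the Scott process of some $N\in\cA^{*}$ of $P$, which refines below $\delta$ exactly when its rank is $\ge\delta$; invoking that in $P$, as in $V$, every structure in $\cA^{*}$ has Scott rank at most $\omega_1$, such an $N$ has rank exactly $\omega_1^P=\delta$. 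Conversely two models of rank $\omega_1^P$ in $P$ with the same length-$(\delta+1)$ process are $\equiv_{\delta}$, and since they have size at most $\aleph_1^P$ a back-and-forth of length $\delta$ exhausts them and yields an isomorphism in $P$. Thus $N\mapsto\SP_{\delta+1}(N)$ is a bijection from the rank-$\omega_1^P$ models of $P$ onto $W$, so $|W|=\gamma$; as $W$ is countable and $\gamma\in\omega\cup\{\aleph_0\}$, this cardinality is absolute.

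It remains to match $W$ with the rank-$\delta$ models of $V$. Injectivity of $M\mapsto\SP_{\delta+1}(M)$ on rank-$\delta$ models is the classical Scott Isomorphism Theorem at the countable ordinal $\delta$. For surjectivity, fix $s\in W$; by the previous paragraph $s$ is realised in $P$ by a model $N_s$ of rank $\omega_1^P=\delta$. Since $P$ is countable in $V$, $N_s$ is a genuinely countable structure of $V$, and because the Scott process of a fixed countable structure is absolute we have $\SP_{\delta+1}(N_s)=s$; comparing stages $\delta$ and $\delta+1$ (equal in $P$ because $\equiv_{\delta}=\,\cong$ there, hence equal in $V$) shows $N_s$ has Scott rank exactly $\delta$ in $V$, while $\Sigma^1_1$-absoluteness keeps $N_s\in\cA^{*}$. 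Hence every $s\in W$ is the Scott process of a rank-$\delta$ model of $V$, the map is onto, and $V$ has exactly $|W|=\gamma$ models of Scott rank $\delta$.

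The step I expect to be the real obstacle is this last surjectivity: a priori an element of $W$ might, in $V$, be the length-$(\delta+1)$ process only of models of rank strictly between $\delta$ and $\omega_1$, so that $W$ overcounts. The import of $N_s$ from $P$ is exactly what rules this out, and it is powered by Theorem~\ref{sec10cor2} together with the absoluteness of the Scott process of a countable structure. Two ingredients that must be secured for this to run are the bound ``every structure in $\cA^{*}$ has Scott rank at most $\omega_1$'' used on the $P$-side, and the usual transfer of $\cA^{*}$-membership and of isomorphism across the collapse; both are of the collapsing-and-absoluteness type already underlying Fact~\ref{fact1} and Theorem~\ref{sec10cor2}, and if the rank bound is not available off the shelf it can be proved by forcing a given structure to be countable and reading its rank in the extension.
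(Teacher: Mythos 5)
Your overall architecture (the club of ordinals of the form $Y\cap\omega_1$, Theorem~\ref{sec10cor2} to transfer $\SP_{\delta+1}(\cA)$ to the collapse $P$, and the identification of surjectivity onto the rank-$\delta$ models of $V$ as the crux) matches the paper's, but there is a genuine gap at exactly the point you flag, and it is not repairable as written. Your argument leans twice on the claim that every structure in $\cA^{*}$ has Scott rank at most $\omega_1$: once to conclude that a model $N\in(\cA^{*})^{P}$ whose process refines properly below $\delta=\omega_1^P$ has rank exactly $\omega_1^P$, and again (in the guise ``$\equiv_\delta\,=\,\cong$ in $P$'') to see that levels $\delta$ and $\delta+1$ of $\SP(N_s)$ agree. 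This claim is false: by Harrington's theorem (proved for counterexamples in \cite{Larson} itself), a counterexample to Vaught's conjecture has models of cardinality $\aleph_1$ with Scott ranks cofinal in $\omega_2$. Your fallback derivation --- collapse the structure to be countable and read off its rank there --- only yields rank $<\omega_2$, not $\leq\omega_1$. Without the bound, your set $W$ (processes refining properly at all stages $<\delta$) is the set of processes of models of rank $\geq\delta$, not exactly $\delta$: an $s\in W$ may be realized in $P$ only by models of rank strictly above $\omega_1^P$, and in $V$ only by models of rank strictly between $\delta$ and $\omega_1$, so both of your claimed bijections break and $W$ genuinely overcounts. Note also that the natural repair --- adding to the definition of $W$ the requirement that the process not refine at stage $\delta$ --- is not available from a process of length $\delta+1$ alone, since that comparison lives at level $\delta+1$ versus $\delta+2$.

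The paper closes exactly this hole with machinery you do not have a substitute for. It fixes representatives $\cM=\{M_n:n\leq\gamma\}$ of the $\gamma$ isomorphism classes of rank-$\omega_1$ models, puts $\cM$ into $Y$, and observes that the collapsed images $\cM_Y$ give $\gamma$ pairwise nonisomorphic rank-$\alpha$ models; for the converse it takes an arbitrary $N$ of rank $\alpha$, notes $\SP_{\alpha+1}(N)\in P$, and then uses Proposition 5.19 of \cite{Larson} (rank $\leq\alpha$ implies the top level of the length-$(\alpha+1)$ process amalgamates, a first-order property of the process visible to $P$) together with Proposition 7.10 of \cite{Larson} (an amalgamating top level yields a countable model of the process inside $P$). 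That produces a model of $\SP_{\alpha+1}(N)$ in $P$, which by Scott's theorem is isomorphic to $N$ and by elementarity is isomorphic to a member of $\cM_Y$. The amalgamation condition on level $\alpha$ is precisely the intrinsic, absolute marker of ``rank exactly $\alpha$'' that your $W$ is missing; if you want to salvage your counting argument you would need to build that condition (and Proposition 7.10) into it, at which point you have essentially reconstructed the paper's proof.
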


\begin{proof}
  Let $\cM = \{ M_{n}: n \leq \gamma \}$ be pairwise nonisomorphic elements of $\cA_{\omega_{1}}$ such that every element of $\cA_{\omega_{1}}$ is isomorphic to some element of $\cM$.
  Let $\cY$ be the set of countable elementary substructures of $H((2^{\aleph_{1}})^{+})$ containing (as elements) $\cM$ and a (fixed) code for $\cA$. We show that for each $Y \in \cY$, letting $\cM_{Y}$ be the image of $\cM$ under the transitive collapse of $Y$, every element of
  $\cA_{Y \cap \omega_{1}}$ is isomorphic to an element of $\cM_{Y}$. As the members of $\cM_{Y}$ will be nonisomorphic, this will establish the theorem.


  Fix $Y \in \cY$, let $\alpha = Y \cap \omega_{1}$ and let $P$ be the transitive collapse of $Y$.
  By Theorem \ref{sec10cor2}, $\SP_{\alpha + 1}(\cA) = \SP_{\alpha +1}(\cA)^{P}$. Suppose toward a contradiction that there exists an
  $N \in \cA_{\alpha} \setminus \cM_{Y}$. Then $\SP_{\alpha + 1}(N) \in P$. Proposition 5.19 of \cite{Larson} then implies that the $\delta$-th level of $\SP_{\delta + 1}(N)$ amalgamates, as defined in Definition 5.16 of \cite{Larson}. Since amalgamation is a first order property it is witnessed in $P$. It follows from Propositon 7.10 of \cite{Larson} that there is a model of $\SP_{\delta + 1}(N)$ in $P$, contradicting the elementarity of the collapse and the assumed property of $\cM$.
\end{proof}

The proofs of Theorems \ref{sec10cor2} and \ref{sec10thrm1} can be used to prove the following variation, which we leave to the interested reader : there is fragment $T$ of ZFC such that, if $x \subseteq \omega$ is a code for an analytic class $\cA$ of $\tau$-structures then for any transitive model $P$ of $T$ containing $x$ and any ordinal $\alpha$ in $P$, if $\SP_{\alpha}(\cA)$ is countable then $\SP_{\alpha}(\cA) = \SP_{\alpha}(\cA)^{P}$, and, if in addition $\alpha < \omega_{2}^{P}$, then every structure in $\cA^{*}$ of Scott rank $\alpha$ is isomorphic to one in $P$.

\end{document}